\documentclass[11pt,a4paper]{amsart}
\usepackage[utf8]{inputenc}
\usepackage[T1]{fontenc}
\usepackage{amsmath,amssymb,amsthm}
\usepackage{enumitem}
\usepackage{xcolor}
\usepackage[colorlinks=true,linkcolor=blue!60!black,citecolor=blue!60!black,urlcolor=blue!60!black]{hyperref}
\usepackage[margin=2.6cm]{geometry}
\input xy
\xyoption{all}

\theoremstyle{plain}
\newtheorem{theorem}{Theorem}[section]
\newtheorem{proposition}[theorem]{Proposition}
\newtheorem{lemma}[theorem]{Lemma}
\newtheorem{corollary}[theorem]{Corollary}

\theoremstyle{definition}
\newtheorem{definition}[theorem]{Definition}
\newtheorem{remark}[theorem]{Remark}

\newcommand{\F}{\mathbb F}
\newcommand{\Z}{\mathbb Z}

\newcommand{\Zp}{\mathbb Z_p}
\newcommand{\PP}{\mathbb P}

\newcommand{\Spec}{\operatorname{Spec}}

\newcommand{\RH}{R\!H}

\title{$R$-trivial Galois cohomology classes\\ and cohomological $p$-dimension}
\author{Charles De Clercq}
\date{\today}

\begin{document}
\begin{abstract}
Given a prime $p$, a field $F$ of characteristic different from $p$ and a finite $p$-primary Galois module $V$, a Galois cohomology class in $H^n(F,V)$ is called $R$-trivial if it can be connected to $0$ by a class defined over an open subscheme of the projective line. All
classes of $H^n(F,V)$ are $R$-trivial if $F$ has cohomological $p$-dimension at most $n$, by the Bloch-Kato conjecture. We show that this bound is sharp: working over fields of iterated Laurent series, we produce, for every $n\geq 1$, a field $F$ of cohomological $p$-dimension
$n+1$ and a Galois module $V$ such that $H^n(F,V)$ contains classes which are not $R$-trivial. The module $V$ is a quotient of the permutation module on the affine hyperplanes of an elementary abelian $p$-group, arising as a quotient of the absolute Galois group of $F$. We also describe $R$-trivial classes as sums of corestrictions of cup products involving a Kummer class.
\end{abstract}
\maketitle

\section{Introduction}

Throughout, $p$ is a prime number and we denote by $\Gamma_F$ the absolute Galois group of a field $F$. By a finite $p$-module we mean a finite discrete $\Gamma_F$-module of $p$-power order. We denote by $H^n(F,-)$ Galois cohomology, identified with \'etale cohomology of $\Spec F$. For simplicity, we assume throughout that $F$ has characteristic $0$ and contains all roots of unity of $p$-power order; this is only to avoid Tate twists. We fix once and for all a generator of the Tate module $\Zp(1)=\varprojlim\mu_{p^m}$ and use it to identify every Tate twist $M(i)$ of a finite $p$-module $M$ with $M$ itself (in particular the Kummer class of $a\in F^*$ is viewed as an element $(a)\in H^1(F,\Z/p^m\Z)$).
 
\subsection{The question}
Let $V$ be a finite $p$-module and $n\ge1$. Following the terminology used for tori \cite{CTSan} and for torsors \cite{FR}, a class $\alpha\in H^n(F,V)$ is called \emph{$R$-trivial} if there is a dense open $U\subseteq\PP^1_F$ containing $0$ and $1$ and a class $\beta\in H^n(U,V)$ such that $\beta(0)=0$ and $\beta(1)=\alpha$ (Definition~\ref{requiv}). The $R$-trivial classes form a subgroup $\RH^n(F,V)$ of $H^n(F,V)$. Kummer theory shows that every class of $H^1(L,\F_p)$, $L/F$ finite, is $R$-trivial (the same holds for $H^1(F,\F_p[\Gamma_F/\Gamma_L])$ by Shapiro's lemma). In
degree $n$, Rost and Voevodsky theorem (Bloch-Kato conjecture), shows that all classes of $H^n(F,\F_p)$ are $R$-trivial. One can ask whether this extends to arbitrary coefficients:
 
\begin{quote}
\emph{Let $n\ge1$. Are all classes of $H^n(F,V)$ $R$-trivial, for every finite $p$-module $V$?}
\end{quote}

If $F$ is of cohomological $p$-dimension at most $n$, the answer is positive: $V$ is a quotient of a permutation module $P$, the map $H^n(F,P)\to H^n(F,V)$ is then surjective and $H^n(F,P)$ is generated by images of pure symbols by Shapiro's lemma and the Bloch-Kato conjecture. We show this condition is sharp by producing counterexamples to this question with fields of iterated Laurent series, of cohomological $p$-dimension $n+1$.

\begin{theorem}\label{main}
   Let $n\geq 1$ and $k$ be an algebraically closed field of characteristic $0$. Consider the field of iterated Laurent series $F=k((x_1))\dots ((x_{n+1}))$ and set $G=\mathrm{Gal}(F(x_1^{1/p},\dots,x_{n+1}^{1/p})/F)$.
   
View the quotient affine hyperplane module $\mathcal{Q}_G$ (Definition~\ref{quotdef}) as a $\Gamma_F$-module through the surjection $\Gamma_F\to G$. Then $H^n(F,\mathcal{Q}_G)$ contains a class which is not $R$-trivial.\end{theorem}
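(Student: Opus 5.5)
We will find a homomorphism $\phi\colon H^n(F,\mathcal{Q}_G)\to A$ to an abelian group $A$ that kills $\RH^n(F,\mathcal{Q}_G)$ but is not identically zero. The paper already describes $R$-trivial classes as sums of corestrictions $\mathrm{cor}_{L/F}((a)\cup\gamma)$, with $a\in L^*$ a Kummer class and $\gamma\in H^{n-1}(L,\mathcal{Q}_G)$. So it is enough to test $\phi$ on such corestricted cup products.

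The field $F$ is an iterated Laurent series field over an algebraically closed field. Hence $\Gamma_F\cong\widehat{\Z}^{n+1}$, and its maximal pro-$p$ quotient is $\Zp^{n+1}$. The group $G\cong(\Z/p)^{n+1}$ is the mod $p$ quotient.

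\textbf{Step 1: reduce to group cohomology.} We choose $\phi$ to be a connecting map followed by a top-degree evaluation. Since $\mathrm{cd}_p F=n+1$, we have $H^{n+1}(F,\F_p)\cong\F_p$, and this group is generated by the symbol $(x_1)\cup\dots\cup(x_{n+1})$. The module $\mathcal{Q}_G$ is a quotient of the permutation module $P_G$ on the affine hyperplanes of $G$. The short exact sequence $0\to K\to P_G\to\mathcal{Q}_G\to 0$ gives a connecting map $\delta\colon H^n(F,\mathcal{Q}_G)\to H^{n+1}(F,K)$. We then compose $\delta$ with a $\Gamma_F$-equivariant functional $K\to\F_p$, or with a norm/coinvariant map, to land in $H^{n+1}(F,\F_p)=\F_p$; call the composite $\phi$. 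By Shapiro's lemma, $H^n(F,P_G)$ is a sum of $H^n(L_H,\F_p)$ over the fields $L_H$ cut out by the stabilisers. These stabilisers are index-$p$ subgroups, since translations of a hyperplane by elements of its direction act trivially. Because $\Gamma_F$ is abelian pro-cyclic-by-pieces, all of this is computable as cohomology of $\Zp^{n+1}$ with coefficients in $G$-modules, via the Koszul complex. In particular, $H^{n+1}$ of any module $M$ is just the coinvariants $M_{\Gamma}$.

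\textbf{Step 2: $\phi$ vanishes on $R$-trivial classes.} Take $\beta=\mathrm{cor}_{L/F}((a)\cup\gamma)$. Corestriction commutes with $\delta$, and cup product with $(a)$ commutes with $\delta$ up to sign. So $\phi(\beta)=\pm\,\mathrm{cor}_{L/F}\bigl((a)\cup\psi(\delta_L\gamma)\bigr)$, where now $\delta_L\gamma\in H^n(L,K)$. The key claim is that any class in $H^n(L,K)$, after applying the functional, becomes a sum of classes whose cup with $(a)$ has trivial corestriction in $H^{n+1}(F,\F_p)$. This is where the specific choice of $K$, hence of $\mathcal{Q}_G$ (Definition~\ref{quotdef}), must enter. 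I expect it to be the main obstacle. My first attempt is to show that the functional factors through a module on which $\Gamma_F$ acts so that the composite $H^n(L,K)\to H^n(L,\F_p)$ lands in classes that are divisible or restricted from a proper sub-torus. Their cup with a Kummer class $(a)$ then corestricts to a class annihilated by the evaluation on $H^{n+1}(F,\F_p)$. Concretely, I will use the residue-map description: $H^{n+1}(F,\F_p)\cong\F_p$ via iterated residues. The cup product $(a)\cup c$, with $c$ coming from a module induced from an index-$p$ stabiliser, has iterated residue computable by a valuation formula. I will try to show that the hyperplane structure forces a cancellation modulo $p$ when summed over the $p$ translates.

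\textbf{Step 3: nonvanishing.} We exhibit an explicit class, for instance the image in $\mathcal{Q}_G$ of a cocycle built from $(x_1)\cup\dots\cup(x_n)$ twisted by a hyperplane indicator. We compute $\phi$ of it via the Koszul complex of $\Zp^{n+1}$ acting through $G$ on $\mathcal{Q}_G$, and check that the result is the generator of $\F_p$. This is a finite linear-algebra computation over $\F_p[G]$, routine once Step 2 has fixed $\phi$.
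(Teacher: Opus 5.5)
Your reduction in Step~2 (commuting $\delta$ with corestriction and with cup product by $(a)$, via Theorem~\ref{corsym}) is sound and is exactly Corollary~\ref{corbock}. However, the point you flag as the main obstacle is the whole content of the theorem, and the ``key claim'' you formulate there is false.

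By Definition~\ref{quotdef}, the kernel $K$ of $\F_p[\mathcal{A}_G]\to\mathcal{Q}_G$ is $\F_p$ embedded diagonally. So there is no freedom in the functional: $\phi$ is a nonzero multiple of $\delta\colon H^n(F,\mathcal{Q}_G)\to H^{n+1}(F,\F_p)$. Your claim then says that $\mathrm{cor}_{L/F}((a)\cup c)=0$ for every $c\in H^n(L,\F_p)$. This already fails for $L=F$, $a=x_1$, $c=(x_2)\cup\cdots\cup(x_{n+1})$, whose cup product is the generator of $H^{n+1}(F,\F_p)$. No residue computation or cancellation over translates can repair this. The claim throws away the only information you have about $\delta_L\gamma$, namely that it lies in
$$\delta_L\bigl(H^{n-1}(L,\mathcal{Q}_G)\bigr)=\ker\Bigl(\iota_*\colon H^n(L,\F_p)\to H^n\bigl(L,\F_p[\mathcal{A}_G]\bigr)\Bigr).$$

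The missing idea is that this kernel is zero for every finite $L/F$ (Proposition~\ref{bock2}). Then $\delta$ kills $\RH^n(F,\mathcal{Q}_G)$ by Corollary~\ref{corbock}, or by Lemma~\ref{bocklemma}. The argument runs as follows.
\begin{enumerate}
\item By Shapiro's lemma, the $H$-component of $\iota_*$ is restriction from $\Gamma_L$ to $\Gamma_L\cap\pi^{-1}(H)$.
\item If $\pi(\Gamma_L)$ lies in some hyperplane $H$, that component is split injective.
\item Otherwise $\pi(\Gamma_L)=G$, and every nonzero $\chi\in H^1(L,\F_p)$ is pulled back from $G$. Since $H^\bullet(L,\F_p)$ is the exterior algebra on $H^1(L,\F_p)$, the kernel of restriction to $\ker\chi$ is $\chi\cup H^{n-1}(L,\F_p)$ (Lemma~\ref{lemmeker}).
\item The intersection of these kernels over all $\chi$ is zero, because $n<n+1$.
\end{enumerate}
This is exactly where ``degree $n$ versus $\mathrm{cd}_p(F)=n+1$'' enters.

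In degree $n+1$ the same restrictions kill everything, and this settles your Step~3 without any explicit cocycle. Using your own identification of $H^{n+1}(F,-)$ with coinvariants, $\iota$ induces $1\mapsto p=0$ on $\F_p\to\F_p[G/H]_G\simeq\F_p$. Hence $\iota_*=0$ in degree $n+1$, and $\delta$ is onto $H^{n+1}(F,\F_p)\simeq\F_p$.
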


The precise statement is given by Theorem \ref{main2}.

\subsection{Affine hyperplane modules}

\begin{definition}\label{quotdef}
Let $E$ be a finite-dimensional vector space over $\F_p$ and let $\mathcal{A}_E$ be the set of its affine hyperplanes, on which $E$ acts by translation. The permutation module $\F_p[\mathcal{A}_E]$  fits into a short exact sequence 
\begin{equation}
0\longrightarrow \F_p \overset{\iota}{\longrightarrow} \F_p[\mathcal{A}_E]\longrightarrow \mathcal{Q}_E\longrightarrow 0\label{se}   
\end{equation}
 where $\iota$ sends $1$ to $\sum_{h\in \mathcal{A}_E}[h]$. The quotient $\mathcal{Q}_E$ is the quotient affine hyperplane module of $E$.  
\end{definition}

\begin{remark}
Denote by $\mathcal{H}_E$ the set of linear hyperplanes of $E$. Grouping affine hyperplanes according to their direction yields $\F_p[\mathcal{A}_E]=\bigoplus_{H\in \mathcal{H}_E}\F_p[E/H]$. If moreover $E=\mathrm{Gal}(L/F)$ is the Galois group of a Galois extension, Shapiro's lemma then identifies $H^i(F,\F_p[\mathcal{A}_E])$ with the direct sum  $\bigoplus_{H\in \mathcal{H}_E}H^i(L^H,\F_p)$ where $L^H/F$ is the cyclic extension of degree $p$ given by $H$.
\end{remark}

\subsection{Statement}

\begin{theorem}\label{main2}  Let $n\geq 1$ and $k$ be an algebraically closed field of characteristic $0$. Consider the field of iterated Laurent series $F=k((x_1))\dots ((x_{n+1}))$ and set $G=\mathrm{Gal}(F(x_1^{1/p},\dots,x_{n+1}^{1/p})/F)$.
View the quotient affine hyperplane module $\mathcal{Q}_G$ as a $\Gamma_F$-module through the surjection $\Gamma_F\to G$. Denoting by $\delta$ the boundary map associated to \eqref{se}, we have
    $$\RH^{n}(F,\mathcal{Q}_G)=\mathrm{ker}\Bigl(\delta:H^{n}(F,\mathcal{Q}_G)\longrightarrow H^{n+1}(F,\F_p)\Bigr),$$
which is a hyperplane in $H^{n}(F,\mathcal{Q}_G)\simeq \F_p^{(n+1)(|\mathcal{H}_G|-1)+1}$. In particular every class $\alpha$ of $H^{n}(F,\mathcal{Q}_G)$ such that $\delta(\alpha)=(x_1)\cup\cdots \cup (x_{n+1})$ is not $R$-trivial. 
\end{theorem}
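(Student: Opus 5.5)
The plan is to prove both inclusions separately, using two inputs. The first is the explicit structure of Galois cohomology of iterated Laurent series. The second is a residue/specialization formula on $\PP^1_F$. Since $k$ is algebraically closed of characteristic $0$, the maximal pro-$p$ quotient of $\Gamma_F$ is $\Zp^{n+1}$, generated by the inertia of $x_1,\dots,x_{n+1}$. Hence $H^*(F,\F_p)$ is the exterior algebra on $(x_1),\dots,(x_{n+1})$, and $H^{n+1}(F,\F_p)\simeq\F_p$ is generated by $(x_1)\cup\cdots\cup(x_{n+1})$. The same holds for every finite extension $E/F$, which is again a field of iterated Laurent series in $n+1$ variables. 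In particular each $L^H$ has this form, and the Remark identifies $H^i(F,\F_p[\mathcal A_G])$ with $\bigoplus_H H^i(L^H,\F_p)$.

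\textbf{Step 1: the dimension count and surjectivity of $\delta$.} Write the long exact sequence of \eqref{se}:
\[
H^n(F,\F_p)\xrightarrow{\iota_*}\bigoplus_H H^n(L^H,\F_p)\to H^n(F,\mathcal Q_G)\xrightarrow{\delta} H^{n+1}(F,\F_p)\xrightarrow{\iota_*}\bigoplus_H H^{n+1}(L^H,\F_p).
\]
Here $\iota_*$ is the sum of restrictions. In top degree the restriction to an index-$p$ open subgroup of $\Zp^{n+1}$ is zero: corestriction is an isomorphism there and $\mathrm{cor}\circ\mathrm{res}=p=0$. Hence $\delta$ is surjective. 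In degree $n$ I would check on the exterior algebra that a class in $\Lambda^n$ killed by restriction to every $L^H$ is $0$. Concretely, for $H$ the kernel of the $i$-th coordinate, restriction kills exactly the part involving $(x_i)$, and intersecting over all $i$ gives $0$ when $n\ge1$. So $\iota_*$ is injective in degree $n$. Then
\[
\dim H^n(F,\mathcal Q_G)=|\mathcal H_G|(n+1)-(n+1)+1,
\]
as claimed, and $\ker\delta$ is a hyperplane.

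\textbf{Step 2: $\ker\delta\subseteq \RH^n$.} By exactness, $\ker\delta$ is the image of $H^n(F,\F_p[\mathcal A_G])=\bigoplus_H H^n(L^H,\F_p)$. By Bloch--Kato each $H^n(L^H,\F_p)$ is spanned by symbols, and symbols are $R$-trivial. Indeed, $(a_1)\cup\cdots\cup(a_n)$ is connected to $0$ by $(ta_1+1-t)\cup(a_2)\cup\cdots$ on the open subset of $\PP^1_{L^H}$ where $ta_1+1-t$ is invertible. Corestriction (i.e.\ Shapiro) and pushforward along $\F_p[\mathcal A_G]\to\mathcal Q_G$ preserve $R$-triviality, and $R$-trivial classes form a group, so every class of $\ker\delta$ is $R$-trivial.

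\textbf{Step 3: $\RH^n\subseteq\ker\delta$ (the main obstacle).} Let $\beta\in H^n(U,\mathcal Q_G)$ with $\beta(0)=0$ and $\beta(1)=\alpha$. Set $\gamma=\delta\beta\in H^{n+1}(U,\F_p)$, so that $\delta\alpha=\gamma(1)-\gamma(0)$. I would use the reciprocity/specialization formula on $\PP^1$, obtained from the localization sequence and the vanishing of $H^{n+2}$ in the relevant places:
\[
\gamma(1)-\gamma(0)=\sum_{x\notin U}\mathrm{cor}_{F(x)/F}\bigl(\partial_x\gamma\cup(f(x))\bigr),\qquad f=t/(t-1).
\]
Since residues commute with $\delta$, we have $\partial_x\gamma=\delta(\partial_x\beta)$ with $\partial_x\beta\in H^{n-1}(F(x),\mathcal Q_G)$. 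This also shows that $R$-trivial classes are sums of corestrictions of classes $\eta\cup(a)$ with $\eta\in H^{n-1}(E,\mathcal Q_G)$ and $a\in E^*$, recovering the description in the abstract.

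It then suffices to show that $\delta:H^{n-1}(E,\mathcal Q_G)\to H^n(E,\F_p)$ vanishes for every finite $E/F$. By exactness, this is equivalent to injectivity of
\[
\iota_*:H^n(E,\F_p)\to H^n(E,\F_p[\mathcal A_G]).
\]
The components of this map are restrictions to the compositum $E\cdot L^H$, taken with multiplicities given by the $\Gamma_E$-orbits on $G/H$. This is where I expect the real work. The image of $\Gamma_E$ in $G$ may be a proper subgroup, and for those $H$ containing it the component is a sum of $p$ copies of the identity embedding. I would try to handle this by choosing coordinates for $E$ adapted to the image $G_E\subseteq G$. For $H\supseteq G_E$ the $H$-summand $\F_p[G/H]$ is a trivial $\Gamma_E$-module of rank $p$ containing the diagonal, so it already gives injectivity as long as some such $H$ exists, i.e.\ as long as $G_E\neq G$. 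When $G_E=G$, the argument of Step 1 applies verbatim over $E$. Either way $\delta\circ\partial_x\beta=0$, hence $\delta\alpha=0$. The final assertion of the theorem then follows from the surjectivity of $\delta$ in Step 1.
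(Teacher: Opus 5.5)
Your proof is correct. Except for one step it follows the paper. Step~1 and the case split $G_E\neq G$ versus $G_E=G$ at the end of Step~3 reproduce Propositions~\ref{deltasurj} and~\ref{bock2}. Your top-degree argument via $\mathrm{cor}\circ\mathrm{res}=p$ is a valid substitute for the paper's remark that restriction kills $\chi$. Step~2 is the paper's argument; Bloch--Kato is superfluous there, since you already have $H^n(L^H,\F_p)=\Lambda^nH^1(L^H,\F_p)$.

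The real difference is how you pass from injectivity of $\iota_*$ over finite extensions to $\delta(\RH^n(F,\mathcal{Q}_G))=0$.
\begin{itemize}
\item The paper uses Lemma~\ref{bocklemma}. The residues of $\delta_U\beta$ at the points of $\PP^1_F\setminus U$ map to zero in $H^n(\kappa(s),\F_p[\mathcal{A}_G])$, hence vanish. So $\delta_U\beta$ extends to $\PP^1_F$ by the Gysin sequence, and the projective bundle formula makes its values at all rational points equal.
\item You instead use an explicit reciprocity formula, writing $\gamma(1)-\gamma(0)$ as a sum of corestrictions of $\partial_x\gamma\cup(f(x))$. This is the route the paper takes later, in Theorem~\ref{corsym} and Corollary~\ref{corbock}.
\end{itemize}
The paper's route is more economical: it needs only the Gysin sequence and the cohomology of $\PP^1_F$, with no residues of corestrictions. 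Yours gives more, namely $\delta(\RH^n)=\sum_E\mathrm{cor}_{E/F}\bigl((E^*)\cup\delta H^{n-1}(E,\mathcal{Q}_G)\bigr)$. That is an exact description of the obstruction, not just its vanishing.

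You only assert the reciprocity formula, though, and the justification you hint at is not the right one: the vanishing of $H^{n+2}$ plays no role. The formula comes from three ingredients:
\begin{itemize}
\item Faddeev's exact sequence \cite[Theorem~9.3]{GMS};
\item the residues of the symbols $(t-\theta_x)\cup\pi^*c$;
\item the compatibility of residues with corestriction.
\end{itemize}
One subtracts corestrictions of such symbols until all residues vanish, exactly as in the proof of Theorem~\ref{corsym}. The signs in your formula depend on conventions, but they do not matter for your vanishing argument.
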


\begin{remark}
The proofs can be extended to show that, for every $n\ge1$ and every $N>n$, there are non-$R$-trivial classes in degree $n$ over a field of cohomological $p$-dimension $N$ (namely $k((x_1))\cdots((x_{N}))$, with the module $\mathcal{Q}_G$ built from the first $n+1$ variables).
\end{remark}

\begin{remark}
The modules $\mathcal{Q}_E$ with $\dim E=2$ are the ones considered in \cite[\S13]{DCF2}.
Combined with Proposition~\ref{corsym} for $n=1$, the results there give, for an arbitrary
field $F$ as in our conventions, a stronger statement than the case $n=1$ of
Theorem~\ref{main2}: all classes of $H^1(F,V)$ are $R$-trivial for every finite $p$-module
$V$ if and only if $\mathrm{cd}_p(F)\le1$. For $n\ge2$ we do not know if the analogous
equivalence holds, between $R$-triviality of all classes of degree $n$ and
$\mathrm{cd}_p(F)\le n$.
\end{remark}

\section{$R$-trivial cohomology classes}

Recall that our base field $F$ has characteristic $0$. Let $X$ be an $F$-scheme, with structure morphism $\pi\colon X\to\Spec F$, and let $M$ be a finite $p$-module, viewed as an \'etale sheaf on $\Spec F$. We write $H^n(X,M)$ for the \'etale cohomology of the locally constant sheaf $\pi^*M$ on $X$. For a rational point $x\in X(F)$, viewed as a section
$x\colon\Spec F\to X$ of $\pi$, and a class $\gamma\in H^n(X,M)$, we denote by $$\gamma(x)=x^*\gamma\in H^n(F,M)$$
the value of $\gamma$ at $x$. We fix a coordinate $t$ on $\PP^1_F$ and denote by $0$ and $1$ the rational points $t=0$ and $t=1$.

\begin{definition}\label{requiv}
    Let $M$ be a finite $p$-module and $n\geq 1$. Two classes $\alpha$ and $\beta$ in $H^n(F,M)$ are $R$-equivalent if there is a dense open subscheme $U\subset \mathbb{P}^1_F$ containing $0$, $1$ and a class $\gamma\in H^n(U,M)$ such that $\gamma(0)=\alpha$ and $\gamma(1)=\beta$. A class is $R$-trivial if it is $R$-equivalent to $0$ and we denote the set of such classes by $\RH^n(F,M)$.  
\end{definition}

$R$-equivalence is an equivalence relation on $H^n(F,M)$ and $\RH^n(F,M)$ is a subgroup. Moreover, $R$-trivial classes are preserved by morphisms of finite $p$-modules $M\to M'$, by the restriction $H^n(F,M)\to H^n(L,M)$ for a finite extension $L/F$, and by Shapiro's isomorphism $H^n(L,M)\simeq H^n(F,\mathrm{Ind}_{\Gamma_L}^{\Gamma_F}M)$.

\begin{lemma}\label{puresymbRtrivial}
Let $n$, $m$ be two positive integers and $M$ be a finite $p$-module such that $p^mM=0$. For any $a\in F^*$ and $\beta\in H^{n-1}(F,M)$, the cup product $(a)\cup\beta $ is $R$-trivial. In particular, pure symbols are $R$-trivial.  
\end{lemma}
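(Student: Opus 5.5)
We realize $(a)\cup\beta$ as the value at $1$ of a class on an open subset of the projective line. The natural candidate is the Kummer class of a linear function of $t$ interpolating between $1$ and $a$. Take $f(t)=1+(a-1)t$, a function on $\PP^1_F$ with $f(0)=1$ and $f(1)=a$. If $a=1$ there is nothing to prove, since $(1)=0$. Otherwise let $U\subset\PP^1_F$ be the complement of the zero of $f$ (the point $t=1/(1-a)$) and of $\infty$. Then $U$ is a dense open subscheme containing $0$ and $1$, and $f$ is a unit on $U$.

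By Kummer theory on $U$, the unit $f\in\mathcal{O}(U)^*$ defines a class $(f)\in H^1(U,\mu_{p^m})$. Using the fixed generator of $\Zp(1)$, we identify this with a class in $H^1(U,\Z/p^m\Z)$. Let $\pi^*\beta\in H^{n-1}(U,M)$ be the pullback of $\beta$ along the structure map $\pi\colon U\to\Spec F$. Since $p^mM=0$, the cup product pairing $\Z/p^m\Z\otimes M\to M$ is available, and we set
\[\gamma=(f)\cup\pi^*\beta\in H^n(U,M).\]

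Next we evaluate $\gamma$ at the two points. Pullback along a section $x\colon\Spec F\to U$ is compatible with cup products. It is also compatible with Kummer classes, because the Kummer sequence $1\to\mu_{p^m}\to\mathbb{G}_m\to\mathbb{G}_m\to1$ is functorial. Finally $x^*\pi^*=\mathrm{id}$. Together these give $\gamma(x)=(f(x))\cup\beta$. Hence $\gamma(0)=(1)\cup\beta=0$ and $\gamma(1)=(a)\cup\beta$, so $(a)\cup\beta$ is $R$-trivial.

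For the last assertion, a pure symbol $(a_1)\cup\cdots\cup(a_n)$ has the form $(a_1)\cup\beta$ with $\beta=(a_2)\cup\cdots\cup(a_n)$ and $M=\Z/p^m\Z$ (or $\F_p$), so the first part applies. When $n=1$ we take $\beta\in H^0$ to be the unit.

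There is no serious obstacle here. The only points requiring care are that the identification of Tate twists is compatible with pullback, which holds because the chosen generator of $\Zp(1)$ is defined over $F$, and that $U$ is indeed open and contains $0$ and $1$.
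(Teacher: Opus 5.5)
Your proposal is correct and follows the paper's proof: you take the Kummer class of $f=1+(a-1)t$ on the open set where $f$ is a unit, cup it with $\pi^*\beta$, and evaluate at $0$ and $1$ using functoriality of Kummer classes and cup products under pullback. The separate treatment of $a=1$ is harmless but unnecessary, since then $f=1$ and the same argument works with $U=\mathbb{A}^1_F$.
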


\begin{proof}
Set $f=1+t(a-1)$ and let $U\subseteq\mathbb{A}^1_F$ be the open subscheme where $f$ is invertible (it contains $0$ and $1$). Let $(f)\in H^1(U,\Z/p^m\Z)$ be the Kummer class of $f\in\mathcal{O}(U)^*$, and set $\gamma=(f)\cup\pi^*\beta\in H^n(U,M)$, where $\pi\colon U\to\Spec F$ is the structure morphism. Since Kummer classes and cup products are compatible with pullbacks, for $x\in\{0,1\}$ we get $\gamma(x)=(f(x))\cup\beta$. Hence $\gamma(0)=(1)\cup\beta=0$ and $\gamma(1)=(a)\cup\beta$.
\end{proof}

\begin{remark}
Let $L/F$ be a finite extension and $n\ge1$. Under Shapiro's isomorphism
$H^n(F,\F_p[\Gamma_F/\Gamma_L])\simeq H^n(L,\F_p)$, sums of pure symbols over $L$ give $R$-trivial classes over $F$. By Kummer theory for $n=1$, and by the Bloch-Kato conjecture in general, these are all the classes: $H^n(F,P)$ is $R$-trivial for every permutation module $P$ with coefficients in $\F_p$.
\end{remark}

\begin{lemma}\label{bocklemma}
    Let $n\geq 1$ and $0\rightarrow A\rightarrow B\rightarrow C \rightarrow 0$ be a exact sequence of finite $p$-modules. If for every finite field extension $L/F$ the map $H^n(L,A)\rightarrow H^n(L,B)$ is injective, then the connecting homomorphism $\delta:H^n(F,C)\rightarrow H^{n+1}(F,A)$ vanishes on $\RH^n(F,C)$.
\end{lemma}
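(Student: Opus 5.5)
The idea is to spread the exact sequence $0\to A\to B\to C\to 0$ out over the open subscheme $U$ and compare the two specializations. Let $\alpha\in\RH^n(F,C)$. Choose $U\subseteq\PP^1_F$ and $\gamma\in H^n(U,C)$ with $\gamma(0)=0$ and $\gamma(1)=\alpha$. The sequence pulls back to an exact sequence of locally constant étale sheaves on $U$, so it has a connecting map $\delta_U\colon H^n(U,C)\to H^{n+1}(U,A)$. This map commutes with pullback along the sections $0,1\colon\Spec F\to U$, which gives
$$\delta(\alpha)=\delta_U(\gamma)(1),\qquad 0=\delta(\gamma(0))=\delta_U(\gamma)(0).$$
Set $\eta=\delta_U(\gamma)\in H^{n+1}(U,A)$. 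It then suffices to show that $\eta(0)=\eta(1)$.

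The first step is to use the hypothesis to show that $\eta$ vanishes at the generic point. Write $K=F(t)$. Since $\eta$ is a boundary, its image $\eta_K$ lies in the kernel of $H^{n+1}(K,A)\to H^{n+1}(K,B)$, which is $\delta_K(H^n(K,C))$. The hypothesis only concerns finite extensions $L/F$, and $K$ is not one, so I would make the argument local on points of $U$. The class $\eta$ is killed in $H^{n+1}(U,B)$. By the long exact sequence, $\eta=0$ exactly when $\delta_U(\gamma)$ is zero, which looks circular. So I would work on the difference $\eta(1)-\eta(0)$ directly instead.

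The second step is that approach. Rigidity-type arguments (homotopy invariance for $\mathbb A^1$ fails for étale cohomology of open subsets) are not available, so I would instead use Gysin/residue sequences on $\PP^1$. Let $Z=\PP^1\setminus U$, a finite set of closed points $z$ with residue fields $F(z)$ finite over $F$. The localization sequence
$$H^{n+1}(\PP^1,A)\to H^{n+1}(U,A)\xrightarrow{\partial}\bigoplus_{z\in Z} H^{n}(F(z),A(-1))$$
shows that $\eta$ comes from $\PP^1_F$ whenever all its residues vanish. By the projective bundle formula, a class on $\PP^1_F$ has the same value at the two rational points $0$ and $1$. It therefore suffices to show $\partial_z\eta=0$ for every $z$. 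Residues commute with boundary maps up to sign, so $\partial_z\eta=\pm\delta_{F(z)}(\partial_z\gamma)$. In the exact sequence for $F(z)$, the group $H^n(F(z),A)$ injects into $H^n(F(z),B)$ by hypothesis, since $F(z)/F$ is finite. Hence the preceding boundary $H^{n-1}(F(z),C)\to H^n(F(z),A)$ is zero; this is the boundary out of degree $n-1$, with Tate twists identified away as in the paper's conventions.

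The main obstacle is that the residue of $\gamma$ lies in degree $n-1$, so the hypothesis kills the boundary $H^{n-1}(F(z),C)\to H^{n}(F(z),A)$. This is exactly the map through which $\partial_z\eta=\pm\delta(\partial_z\gamma)$ factors, and so $\partial_z\eta=0$. The indices therefore match, and the point to check carefully is the compatibility of the residue maps with the connecting homomorphisms. This is a standard naturality of the Gysin triangle with respect to the short exact sequence of coefficient sheaves. Once $\eta$ extends to $\PP^1_F$, we get $\eta(0)=\eta(1)$, so $\delta(\alpha)=0$.
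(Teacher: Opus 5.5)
Your argument is correct and is essentially the paper's proof: you push $\gamma$ to $\eta=\delta_U(\gamma)\in H^{n+1}(U,A)$, show that all residues of $\eta$ at the points of $\PP^1_F\setminus U$ vanish so that $\eta$ extends to $\PP^1_F$, and conclude $\eta(0)=\eta(1)$ by the projective bundle formula. The only difference is how the residues are killed: you use the compatibility $\partial_z\circ\delta_U=\pm\delta\circ\partial_z$, whereas the paper uses only functoriality of the Gysin sequence in the map $A\to B$ (since $\eta$ dies in $H^{n+1}(U,B)$, its residues die in $H^n(\kappa(z),B)$ and therefore vanish by injectivity), which avoids the naturality check you flag; both routes land in $\ker\bigl(H^n(\kappa(z),A)\to H^n(\kappa(z),B)\bigr)=0$.
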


\begin{proof}
    Pick $\alpha\in \RH^n(F,C)$, so that there is a suited open $U\subset \mathbb{P}^1_F$ and $\gamma\in H^n(U,C)$ such that $\gamma(0)=0$ and $\gamma(1)=\alpha$. Denote by $\delta_U$ the connecting homomorphism of the exact sequence 
    
        \begin{equation}
        0\longrightarrow A \longrightarrow B \longrightarrow C \longrightarrow 0 \vspace{0.3cm}
    \end{equation}
    \noindent of sheaves over $U$ and set $\tilde{\gamma}=\delta_U(\gamma)\in H^{n+1}(U,A)$.

    \noindent As connecting homomorphisms commute with pullbacks, $\tilde{\gamma}(0)=\delta(\gamma((0))=0$ and $\tilde{\gamma}(1)=\delta(\alpha)$.

\noindent Let $Z=\PP^1_F\setminus U$, a finite set of closed points. By the Gysin sequence (see \cite[Ch. VI, §5]{Milne}, recall
that Tate twists are trivialized in our context), for every finite $p$-module $M$ there is an exact sequence
$$H^{n+1}(\PP^1_F,M)\longrightarrow H^{n+1}(U,M)
\xrightarrow{\ (\mathrm{Res}_s)_{s\in Z}\ }\bigoplus_{s\in Z}H^{n}(\kappa(s),M),$$
functorial in $M$. In particular, for every $s\in Z$ we have a commutative square
$$
\xymatrix{
H^{n+1}(U,A) \ar[r]^-{\mathrm{Res}_s} \ar[d] & H^{n}(\kappa(s),A) \ar[d] \\
H^{n+1}(U,B) \ar[r]^-{\mathrm{Res}_s} & H^{n}(\kappa(s),B).
}
$$
By the long exact sequence of cohomology on $U$, the image of $\tilde\gamma=\delta_U(\gamma)$
in $H^{n+1}(U,B)$ is zero. Hence the image of $\mathrm{Res}_s(\tilde\gamma)$ in
$H^n(\kappa(s),B)$ is zero, and $\mathrm{Res}_s(\tilde\gamma)=0$ by our assumption applied to
$L=\kappa(s)$. Therefore $\tilde\gamma=(\gamma_1)_{|U}$ for some
$\gamma_1\in H^{n+1}(\PP^1_F,A)$.

By the projective bundle formula,
$H^{n+1}(\PP^1_F,A)=\pi^*H^{n+1}(F,A)\oplus\xi\cup\pi^*H^{n-1}(F,A)$, where
$\pi\colon\PP^1_F\to\Spec F$ is the structure morphism and $\xi$ is the first Chern class of
$\mathcal{O}(1)$. For $x\in\PP^1(F)$ we have $x^*\pi^*=\mathrm{id}$ and $x^*\xi=c_1(x^*\mathcal{O}(1))=0$ since every line bundle on $\Spec F$ is
trivial. In particular $x^*\gamma_1$ does not depend on $x$ and
$$\delta(\alpha)=\tilde\gamma(1)=1^*\gamma_1=0^*\gamma_1=\tilde\gamma(0)=0.$$
\end{proof}

\section{Galois cohomology of iterated Laurent series}

From now on, $k$ is an algebraically closed field of characteristic $0$, $n$, $m$ are positive integers and $F=k((x_1))\cdots((x_{n+1}))$. We recall the following facts (see \cite{Serre} for (1) and
\cite{Wadsworth} for (2)--(4)).

\begin{enumerate}
\item The absolute Galois group $\Gamma_F$ is isomorphic to $\hat{\Z}^{n+1}$, and every open subgroup $U\subseteq\Gamma_F$ is also isomorphic to $\hat{\Z}^{n+1}$. The pro-$p$-Sylow subgroup $U_p$ of $U$ is isomorphic to $\Z_p^{n+1}$.
\item For every open subgroup $U\subseteq\Gamma_F$, the $\Z/p^m\Z$-module $$H^1(U,\Z/p^m\Z)=\mathrm{Hom}(U,\Z/p^m\Z)=\mathrm{Hom}(U_p,\Z/p^m\Z)$$ is free of rank $n+1$. For $U=\Gamma_F$, a basis is given by the Kummer classes $(x_1),\dots,(x_{n+1})$.
\item If $U\subseteq\Gamma_F$ is an open subgroup, the cup product induces an isomorphism of graded $\Z/p^m\Z$-algebras
$$\Lambda^\bullet_{\Z/p^m\Z}\,\mathrm{Hom}(U,\Z/p^m\Z)\overset{\sim}{\longrightarrow}H^\bullet(U,\Z/p^m\Z).$$
In particular, $H^{n+1}(U,\Z/p^m\Z)$ is free of rank one, $H^i(U,\Z/p^m\Z)=0$ for $i>n+1$, and
$F$ is of cohomological $p$-dimension $n+1$.
\item For open subgroups $U_1\subseteq U\subseteq\Gamma_F$, under the isomorphisms of (3),
the restriction $\mathrm{res}^U_{U_1}\colon H^i(U,\Z/p^m\Z)\to H^i(U_1,\Z/p^m\Z)$ is the $i$-th exterior
power of the natural restriction of characters $\mathrm{Hom}(U,\Z/p^m\Z)\to\mathrm{Hom}(U_1,\Z/p^m\Z)$.
\end{enumerate}

\begin{lemma}\label{lemmeker}
    Let $U\subset \Gamma_F$ be an open subgroup, and let $\chi\in H^1(U,\Z/p^m\Z)$ be a character whose reduction mod $p$ is not trivial. Set $U_{\chi}=\mathrm{ker}(\chi ~\mathrm{mod}~ p)$. For every positive integer $i$,
    $$\mathrm{ker}\left(\mathrm{res}_{U_{\chi}}^U\colon H^i(U,\Z/p^m\Z)\longrightarrow H^i(U_{\chi},\Z/p^m\Z)\right)=p^{m-1}\left(\chi \cup H^{i-1}(U,\Z/p^m\Z)\right).$$
\end{lemma}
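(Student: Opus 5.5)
The computation reduces to linear algebra over exterior algebras, using facts (1)--(4). Since $U$ is open in $\Gamma_F$, it is isomorphic to $\hat{\Z}^{n+1}$, and characters with values in $\Z/p^m\Z$ factor through $U_p\simeq\Z_p^{n+1}$. So $\mathrm{Hom}(U,\Z/p^m\Z)$ is free of rank $n+1$ over $R=\Z/p^m\Z$.

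The first step is to choose a good basis. Since $\chi \bmod p\neq 0$, the element $\chi$ is not in $p\,\mathrm{Hom}(U,R)$, so it is unimodular in the free $R$-module. I would extend it to a basis $e_1=\chi,e_2,\dots,e_{n+1}$ of $\mathrm{Hom}(U,R)$. Dually, this gives a $\Z_p$-basis $u_1,\dots,u_{n+1}$ of $U_p$ with $e_j(u_k)=\delta_{jk}$ mod $p^m$. Then $U_\chi\cap U_p$, the kernel of $\chi$ mod $p$ on $U_p$, is generated by $pu_1,u_2,\dots,u_{n+1}$.

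Next I would compute the restriction of characters. The subgroup $U_\chi$ is open in $\Gamma_F$ (of index $p$ in $U$), and its pro-$p$ Sylow is $(U_\chi)_p=U_\chi\cap U_p$, since the index is $p$. Hence $\mathrm{Hom}(U_\chi,R)$ has basis $f_1,\dots,f_{n+1}$ dual to $pu_1,u_2,\dots,u_{n+1}$. The restriction sends $e_1\mapsto pf_1$ and $e_j\mapsto f_j$ for $j\ge2$. The only point to check is that the $p$-Sylow of $U_\chi$ really is this sublattice; this holds because prime-to-$p$ parts of $U$ lie in $U_\chi$ and do not affect $\mathrm{Hom}(-,R)$.

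By fact (4), $\mathrm{res}$ on $H^i$ is the $i$-th exterior power of this map. On the monomial basis $e_I=e_{j_1}\wedge\cdots\wedge e_{j_i}$ it gives $e_I\mapsto f_I$ if $1\notin I$, and $e_I\mapsto p f_I$ if $1\in I$. Since the $f_I$ form an $R$-basis of $\Lambda^iR^{n+1}$ (fact (3)), an element $\sum a_I e_I$ lies in the kernel exactly when $a_I=0$ for $1\notin I$ and $pa_I=0$, that is $a_I\in p^{m-1}R$, for $1\in I$. Thus the kernel is
$$p^{m-1}\bigoplus_{1\in I}Re_I=p^{m-1}\bigl(e_1\wedge\Lambda^{i-1}\bigr)=p^{m-1}\bigl(\chi\cup H^{i-1}(U,\Z/p^m\Z)\bigr).$$
For $i=1$ this reads $p^{m-1}R\chi$, which is consistent with the convention $H^0=R$.

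Everything is routine once facts (3) and (4) are granted. The main obstacle is the bookkeeping in the second step: identifying $\mathrm{Hom}(U_\chi,R)$ and the restriction map explicitly, in particular justifying the passage to pro-$p$ Sylow subgroups and the dual basis over $\Z_p$ rather than over $R$.
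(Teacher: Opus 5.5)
Your proof is correct and follows the paper's argument. The paper lifts $\chi$ to $\tilde\chi\colon U_p\to\Z_p$, normalizes some $g_0$ so that $\tilde\chi(g_0)=1$, and splits $U_p=\Z_p g_0\oplus\ker\tilde\chi$; these play the roles of your $u_1$ and $\langle u_2,\dots,u_{n+1}\rangle$. It then computes the restriction as $\chi\mapsto p\chi_2$ and the identity on the complement, and reads off the kernel on exterior powers exactly as you do. The dual-basis step you flagged is fine: any lift of $e_1,\dots,e_{n+1}$ to $\mathrm{Hom}(U_p,\Z_p)$ has a matrix invertible mod $p$, hence is a $\Z_p$-basis, and its dual basis gives the required $u_k$.
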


\begin{proof}
As stated above, for any $i\geq 1$, $H^i(U,\Z/p^m\Z) = \Lambda^{\!i}\mathrm{Hom}(U_p,\Z/p^m\Z)$, where $U_p\simeq \Z_p^{n+1}$.
    Pick a lift $\tilde{\chi}\colon U_p\longrightarrow \Z_p$ of $\chi_{|U_p}$. Since $\chi$ is not trivial modulo $p$, there is an element $g_0\in U_p$ such that $\tilde{\chi}(g_0)$ is invertible. We may replace $g_0$ by $\tilde{\chi}(g_0)^{-1}g_0$ and thus assume $\tilde{\chi}(g_0)=1$, so that $U_p=\Z_pg_0\oplus \mathrm{ker}(\tilde{\chi})$.

    Now since $U_\chi\cap U_p=\{g\in U_p, \tilde\chi(g)\in p\Z_p\}$, we have $U_{\chi}\cap U_p=(p\Z_p)g_0\oplus \mathrm{ker}(\tilde{\chi})$: a character of $U_p$ (resp.\ $U_{\chi}\cap U_p$) is determined by its value on $g_0$ (resp.\ $pg_0$) and its restriction to $\mathrm{ker}(\tilde{\chi})$. Setting $\chi_2=\tilde{\chi}/p~~\mathrm{mod}~~p^m\colon U_p\cap U_{\chi}\longrightarrow \Z/p^m\Z$, we have
    $$\left\{\begin{array}{l}
         \mathrm{Hom}(U,\Z/p^m\Z)=\mathrm{Hom}(U_p,\Z/p^m\Z)=\Z/p^m\Z\cdot \chi \oplus \mathrm{Hom}(\mathrm{ker}(\tilde{\chi}),\Z/p^m\Z)  \\
         \mathrm{Hom}(U_{\chi},\Z/p^m\Z)=\mathrm{Hom}(U_p\cap U_{\chi},\Z/p^m\Z)=\Z/p^m\Z\cdot \chi_2 \oplus \mathrm{Hom}(\mathrm{ker}(\tilde{\chi}),\Z/p^m\Z)
    \end{array}\right.$$
and through this identification, the restriction map $\mathrm{res}_{U_{\chi}}^U:H^1(U,\Z/p^m\Z)\longrightarrow H^1(U_{\chi},\Z/p^m\Z)$ 
sends $\chi$ to $p\chi_2$ and its restriction to $\mathrm{Hom}(\mathrm{ker}(\tilde{\chi}),\Z/p^m\Z)$ is the identity.

Now every element of $\Lambda^i\mathrm{Hom}(U,\Z/p^m\Z)$ is uniquely written as $\chi\wedge \omega + \omega'$ with $\omega$ in $\Lambda^{i-1}\mathrm{Hom}(\mathrm{ker}(\tilde{\chi}),\Z/p^m\Z)$, $\omega'\in \Lambda^{i}\mathrm{Hom}(\mathrm{ker}(\tilde{\chi}),\Z/p^m\Z)$ so that 
$$\mathrm{res}_{U_{\chi}}^U(\chi\wedge \omega + \omega')=p\chi_2\wedge \omega +\omega'.$$
This vanishes if and only if $\omega'=0$ and $p\omega=0$, that is $\omega \in p^{m-1}\Lambda^{i-1}\mathrm{Hom}(\mathrm{ker}(\tilde{\chi}),\Z/p^m\Z)$. Consequently using $\chi\wedge\chi=0$ we have $$\mathrm{ker}\left(\mathrm{res}_{U_{\chi}}^U\right)=p^{m-1}\chi \wedge \left(\Lambda^{i-1}\mathrm{Hom}(\mathrm{ker}(\tilde{\chi}),\Z/p^m\Z)\right)=p^{m-1}\chi \wedge \left(\Lambda^{i-1}\mathrm{Hom}(U,\Z/p^m\Z)\right)$$
since $\Lambda^{i-1}\mathrm{Hom}(U,\Z/p^m\Z)=\chi\wedge\Lambda^{i-2}\mathrm{Hom}(\mathrm{ker}(\tilde{\chi}),\Z/p^m\Z)\oplus\Lambda^{i-1}\mathrm{Hom}(\mathrm{ker}(\tilde{\chi}),\Z/p^m\Z)$.
\end{proof}

\section{A quotient affine hyperplane module associated to $F$}

We keep the notation of the previous section, and set
$$L=F(x_1^{1/p},\dots,x_{n+1}^{1/p}),\qquad G=\mathrm{Gal}(L/F).$$
By the facts recalled in the previous section, the group $G$ is an $\F_p$-vector space of dimension $n+1$, so that we may consider the set $\mathcal{H}_G$ of linear hyperplanes of $G$, as well as the quotient affine hyperplane module $\mathcal{Q}_G$ of Definition~\ref{quotdef}.
Recall the exact sequence
\begin{equation}\label{seG}
0\longrightarrow\F_p\overset{\iota}{\longrightarrow}\bigoplus_{H\in\mathcal{H}_G}\F_p[G/H]
\longrightarrow\mathcal{Q}_G\longrightarrow0,
\end{equation}
where $\iota(1)=\bigl(\sum_{c\in G/H}c\bigr)_{H\in\mathcal{H}_G}$. It is a sequence of
$\F_p[G]$-modules, which we view as a sequence of $\Gamma_F$-modules through the quotient map $\pi\colon \Gamma_F\rightarrow G$. For every open subgroup $U\subseteq\Gamma_F$, we denote by $\delta_U\colon H^i(U,\mathcal{Q}_G)\longrightarrow H^{i+1}(U,\F_p)$ the connecting homomorphism in the long exact sequence of cohomology associated to \eqref{seG}.

\begin{proposition}\label{bock2}
    For any open subgroup $U\subset \Gamma_F$ and any $i\leq n$, the map $$\iota_*\colon H^i(U,\F_p)\rightarrow H^i\Bigl( U,\bigoplus_{H\in \mathcal{H}_G} \F_p[G/H]\Bigr)$$ is injective.
\end{proposition}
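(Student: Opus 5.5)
Via Shapiro's lemma, $H^i(U,\F_p[G/H])$ becomes cohomology of an open subgroup of $U$, and then Lemma~\ref{lemmeker} with $m=1$ lets us compute kernels explicitly.

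\textbf{Decomposition over $U$.} The $U$-module $\F_p[G/H]$ splits according to the double cosets $U\backslash \Gamma_F/\pi^{-1}(H)$. Since $G$ is abelian, these correspond to $G/(H+\pi(U))$. Two cases occur.

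If $\pi(U)\subseteq H$, then $U$ acts trivially on $\F_p[G/H]$, which is $p$ copies of $\F_p$. The component of $\iota$ is the diagonal map $\F_p\to\F_p^{p}$, and the induced map on $H^i(U,-)$ is injective for all $i$. So if some $H\in\mathcal{H}_G$ contains $\pi(U)$, we are done.

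If $\pi(U)\not\subseteq H$ for every $H$, then $\pi(U)=G$, and each $\F_p[G/H]$ is, as a $U$-module, the induced module $\mathrm{Ind}_{U_H}^U\F_p$ with $U_H=U\cap\pi^{-1}(H)$, a subgroup of index $p$. The element $\sum_{c\in G/H}c$ corresponds to the norm. By Shapiro's lemma, the component of $\iota_*$ is
$$\mathrm{res}^U_{U_H}\colon H^i(U,\F_p)\to H^i(U_H,\F_p).$$
Moreover $U_H=\ker(\chi_H\bmod p)$, where $\chi_H\colon U\to G\to G/H\simeq\F_p$ is a nontrivial character.

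\textbf{Applying Lemma~\ref{lemmeker}.} With $m=1$, the kernel of $\mathrm{res}^U_{U_H}$ equals $\chi_H\cup H^{i-1}(U,\F_p)$. It therefore suffices to show
$$\bigcap_{H\in\mathcal{H}_G}\chi_H\wedge\Lambda^{i-1}\mathrm{Hom}(U,\F_p)=0\qquad\text{in }\Lambda^i\mathrm{Hom}(U,\F_p)\text{ for } i\le n.$$

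\textbf{Linear algebra.} Let $W=\mathrm{Hom}(U,\F_p)$, which has dimension $n+1$ by fact (2). The characters $\chi_H$ range over all nonzero elements of the image of the inflation map $\mathrm{Hom}(G,\F_p)\to W$. This map is injective because $\pi(U)=G$, and both spaces have dimension $n+1$, so it is an isomorphism. Hence the $\chi_H$ span every line of $W$; in particular they include a basis $e_1,\dots,e_{n+1}$. Now $\omega\in e_j\wedge\Lambda^{i-1}W$ holds exactly when every monomial of $\omega$ in this basis contains $e_j$. If this holds for all $j$, every monomial of $\omega$ contains all $n+1$ basis vectors, which forces $\omega=0$ once $i\le n$.

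For $i=0$ the map $\iota_*$ is injective directly, since $\iota$ itself is injective and $\F_p$ has trivial action. The main obstacle is keeping the identifications straight: the Shapiro reduction of the components of $\iota$ to restriction maps, and the fact that the $\chi_H$ fill $W$. Once these are in place, the rest is the elementary exterior-algebra step above.
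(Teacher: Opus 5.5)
Your proof is correct and follows the paper's argument essentially step for step. It uses the same case split according to whether $\pi(U)$ lies in a hyperplane (split diagonal map in the first case). In the second case it uses Shapiro's lemma to identify each component of $\iota_*$ with the restriction to $\ker\chi$, applies Lemma~\ref{lemmeker} with $m=1$, and concludes with the same monomial argument in a basis of $\mathrm{Hom}(U,\F_p)$.
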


\begin{proof}For $i=0$ there is nothing to prove, so we assume $i\ge1$.

\textit{Case $\pi(U)\neq G$.} In this case $\pi(U)$ is contained in a hyperplane $H\in\mathcal{H}_G$, so that $U$ acts trivially on $G/H$. The $H$-component $\iota_H\colon\F_p\to\F_p[G/H]\simeq\F_p^{\,p}$ of $\iota$ is then the diagonal map of trivial $U$-modules, which is split. Hence $(\iota_H)_*$ is injective, and so is $\iota_*$.

\emph{Case $\pi(U)=G$.} Here
$\pi_{|U}^*\colon\mathrm{Hom}(G,\F_p)\to\mathrm{Hom}(U,\F_p)$ is injective, hence bijective, both spaces being of dimension $n+1$. So every nonzero character $\chi$ of $U$ is of the form $\psi\circ\pi_{|U}$ for a nonzero character $\psi$ of $G$, whose kernel is a hyperplane $H\in\mathcal{H}_G$, and any such $H$ arises in this way. For every such $H$ and $\chi$, the group $U$ acts transitively on $G/H$ with stabilizer $U_\chi=\ker\chi$, so that by Shapiro's lemma, $H^i(U,\F_p[G/H])=H^i(U_\chi,\F_p)$ and the $H$-component $(\iota_H)_*$ is the restriction $\mathrm{res}^U_{U_\chi}$. Therefore, by Lemma \ref{lemmeker} (for $m=1$),
$$\ker\iota_*=\bigcap_{H\in\mathcal{H}_G}\mathrm{ker}((\iota_H)_*)=\bigcap_{0\ne\chi\in H^1(U,\F_p)}\ker\bigl(\mathrm{res}^U_{U_\chi}\bigr)
=\bigcap_{0\ne\chi\in H^1(U,\F_p)}\chi\cup H^{i-1}(U,\F_p).$$
Choose a basis $e_1,\dots,e_{n+1}$ of $H^1(U,\F_p)$, so that the monomials
$e_J=\bigwedge_{j\in J}e_j$, $|J|=i$, form a basis of $H^i(U,\F_p)=\Lambda^iH^1(U,\F_p)$. For each $j$, the subspace $e_j\cup H^{i-1}(U,\F_p)$ is spanned by the $e_J$ with $j\in J$.
Hence an element of $\ker\iota_*$, which lies in all of these subspaces, only involves monomials $e_J$ with $J\supseteq\{1,\dots,n+1\}$. As $|J|=i<n+1$ there is no such monomial and therefore we get $\ker\iota_*=0$.
\end{proof}

\begin{proposition}\label{deltasurj}
    The map $\iota_*\colon H^{n+1}(F,\F_p)\longrightarrow H^{n+1}\Bigl(F,\bigoplus_{H\in \mathcal{H}_G} \F_p[G/H]\Bigr)$ vanishes.\\In particular, the connecting homomorphism $\delta_{\Gamma_F}\colon H^{n}(F,\mathcal{Q}_G)\to H^{n+1}(F,\F_p)$ is surjective.
\end{proposition}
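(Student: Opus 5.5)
The plan is to check the vanishing one hyperplane at a time. Write $\iota_*$ as the direct sum of its components $(\iota_H)_*$ over $H\in\mathcal{H}_G$; it suffices to show that each component kills $H^{n+1}(F,\F_p)$. The surjectivity of $\delta_{\Gamma_F}$ then follows at once from the long exact sequence attached to \eqref{seG}. Indeed, exactness at $H^{n+1}(F,\F_p)$ says $\mathrm{im}\,\delta_{\Gamma_F}=\ker\iota_*$, and this kernel is everything.

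For a fixed $H$, I would use the same reduction as in the second case of Proposition~\ref{bock2}. Here $U=\Gamma_F$ and $\pi$ is surjective, so there is a nonzero character $\psi$ of $G$ with kernel $H$; set $\chi=\psi\circ\pi\in H^1(F,\F_p)$. Then $\Gamma_F$ acts transitively on $G/H$ with stabilizer $\ker\chi$. By Shapiro's lemma, $(\iota_H)_*$ becomes the restriction
$$\mathrm{res}^{\Gamma_F}_{\ker\chi}\colon H^{n+1}(\Gamma_F,\F_p)\longrightarrow H^{n+1}(\ker\chi,\F_p).$$
The diagonal element $\sum_{c\in G/H}c$ corresponds to the constant function, which is what makes this identification valid.

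Next I would apply Lemma~\ref{lemmeker} with $m=1$ and $i=n+1$: the kernel of this restriction is $\chi\cup H^{n}(F,\F_p)$. It remains to see that this is all of $H^{n+1}(F,\F_p)$. Since $\chi\neq 0$, extend it to a basis $\chi=e_1,e_2,\dots,e_{n+1}$ of $H^1(F,\F_p)$. By fact (3), $H^{n+1}(F,\F_p)=\Lambda^{n+1}$ is one-dimensional, spanned by $e_1\wedge\cdots\wedge e_{n+1}$. This element equals $\chi\cup(e_2\wedge\cdots\wedge e_{n+1})$, so it lies in $\chi\cup H^n(F,\F_p)$. Hence the kernel is everything and $(\iota_H)_*=0$.

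An alternative for this last step is to use fact (4) directly. Restriction to $\ker\chi$ sends $\chi$ to $p\chi_2=0$ mod $p$ and leaves a complement unchanged. Its top exterior power therefore contains the factor $p$ and vanishes mod $p$. There is no real obstacle in this argument. The only point needing care is the identification of $(\iota_H)_*$ with restriction under Shapiro's lemma, and that identification was already used in Proposition~\ref{bock2}.
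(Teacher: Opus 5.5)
Your proposal is correct and is essentially the paper's argument: identify each component $(\iota_H)_*$ with $\mathrm{res}^{\Gamma_F}_{\ker\chi}$ via Shapiro's lemma, extend $\chi$ to a basis $e_1,\dots,e_{n+1}$, and note that the generator $e_1\cup\cdots\cup e_{n+1}$ is killed, which gives $\iota_*=0$ and hence the surjectivity of $\delta_{\Gamma_F}$. The only difference is that you cite Lemma~\ref{lemmeker} for the kernel, whereas the paper uses just its easy inclusion directly (restriction respects cup products and $\chi$ restricts to $0$ on $\ker\chi$), and that inclusion is all that is needed here.
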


\begin{proof}
Let $H\in\mathcal{H}_G$. As in the second case of the proof of Proposition~\ref{bock2} (with $U=\Gamma_F$), the $H$-component $(\iota_H)_*$ is the restriction $\mathrm{res}^{\Gamma_F}_{U_\chi}$, where $\chi$ is a nonzero character of $\Gamma_F$ with kernel $U_\chi=\pi^{-1}(H)$. Complete $\chi=e_1$ into a basis $e_1,\dots,e_{n+1}$ of $H^1(F,\F_p)$, so that $H^{n+1}(F,\F_p)=\F_p\cdot e_1\cup\dots\cup e_{n+1}$. Since restriction is compatible with cup products and $\mathrm{res}^{\Gamma_F}_{U_\chi}(\chi)=0$, we get $(\iota_H)_*(e_1\cup\dots\cup e_{n+1})=0$, hence $\iota_*=0$.
\end{proof}

\begin{proof}[Proof of Theorem \ref{main2}] We first show that $$\RH^{n}(F,\mathcal{Q}_G)=\mathrm{ker}\Bigl(\delta:H^{n}(F,\mathcal{Q}_G)\longrightarrow H^{n+1}(F,\F_p)\Bigr).$$
To show that classes in $\mathrm{ker}(\delta)$ are $R$-trivial, it is enough to prove that for all hyperplanes $H\subset G$, every class of $H^{n}(F,\F_p[G/H])$ is $R$-trivial by the long exact sequence of \eqref{seG}. Using Shapiro's lemma again, we are reduced to showing that all classes in $H^{n}(L^H,\F_p)$ are $R$-trivial. Since $H^{n}(L^H,\F_p)=\Lambda^{n}H^1(L^H,\F_p)$ is spanned by cup-products of $n$ classes of $H^1(L^H,\F_p)$, i.e.\ by pure symbols $(a_1)\cup \cdots \cup (a_{n})$ with $a_i\in (L^H)^*$, it follows from Lemma \ref{puresymbRtrivial}. 

As for the other inclusion, Proposition \ref{bock2} ensures for any finite field extension $K/F$ the injectivity of $\iota_*\colon H^{n}(K,\F_p)\longrightarrow H^{n}\Bigl(K,\bigoplus_{H\in \mathcal{H}_G} \F_p[G/H]\Bigr)$. It remains to apply Lemma \ref{bocklemma}.

To get the dimension statement, recall that $\delta$ is surjective by Proposition \ref{deltasurj} onto the line $H^{n+1}(F,\F_p)$, so that $\RH^{n}(F,\mathcal{Q}_G)$ is a hyperplane. Moreover, the exact sequence
$$0\longrightarrow H^{n}(F,\F_p)\overset{\iota_*}{\longrightarrow} H^{n}\Bigl(F,\bigoplus_{H\in \mathcal{H}_G} \F_p[G/H]\Bigr)\longrightarrow H^{n}(F,\mathcal{Q}_G)\overset{\delta}{\longrightarrow} H^{n+1}(F,\F_p)\longrightarrow 0$$
together with $\dim\Bigl(H^{n}(F,\bigoplus_{H\in \mathcal{H}_G} \F_p[G/H])\Bigr)=\sum_{H\in\mathcal{H}_G}\dim\Bigl(H^{n}(L^H,\F_p)\Bigr)=|\mathcal{H}_G|(n+1)$ and $\dim\Bigl(H^{n}(F,\F_p)\Bigr)=n+1$ indeed give $\dim\Bigl(H^{n}(F,\mathcal{Q}_G)\Bigr)=|\mathcal{H}_G|(n+1)-(n+1)+1$
\end{proof}

\section{$R$-trivial classes and corestrictions}

In this section we describe the group of $R$-trivial classes as sums of corestrictions of
cup products involving a Kummer class. From now on $F$ is any field of characteristic $0$ containing all roots of unity of $p$-power order, $n$ and $m$ are positive integers, and $M$ is a finite $p$-module with $p^mM=0$. Kummer classes are taken with coefficients in $\Z/p^m\Z$.

\subsection{Corestrictions}

Let $f\colon Y\to X$ be a finite \'etale morphism and let $M$ be a finite $p$-module, viewed
as a sheaf on $X$. Since $f_*$ is exact, we have $H^n(Y,M)=H^n(X,f_*f^*M)$, and the trace
map $f_*f^*M\to M$ induces the corestriction
$$\mathrm{cor}_{Y/X}\colon H^n(Y,M)\to H^n(X,M).$$
For $X=\Spec F$ and $Y=\Spec L$, with $L/F$ a finite extension, this is the usual
corestriction $\mathrm{cor}_{L/F}$ in Galois cohomology.

Since corestrictions commute with base change, they preserve $R$-trivial classes. Indeed, let $L/F$ be a finite
extension and $\beta\in H^n(U',M)$ with $U'\subseteq\mathbb{A}^1_L$ a dense open containing
$0$ and $1$. We may assume that $U'=U_L$ is the inverse image of a
dense open $U\subseteq\mathbb{A}^1_F$ containing $0$ and $1$, and then
$(\mathrm{cor}_{U_L/U}\,\beta)(x)=\mathrm{cor}_{L/F}(\beta(x))$ for $x\in\{0,1\}$.

\subsection{Residues}
Let $U\subseteq\mathbb{A}^1_F$ be a dense open subscheme and let $\beta\in H^n(U,M)$. We
denote by $\eta$ the generic point of the affine line under consideration, and by
$\beta_\eta$ the image of $\beta$ in $H^n(F(t),M)$. Let $s$ be a closed point of
$\mathbb{A}^1_F$, with residue field $\kappa(s)$ and associated discrete valuation $v_s$ on $F(t)$. Following \cite[\S7.13]{GMS}, the residue of $\beta_\eta$ at $v_s$ is a class
$$\mathrm{Res}_s(\beta)\in H^{n-1}(\kappa(s),M),$$
which vanishes if $s\in U$. Moreover, for $x\in U(F)$, the value $\beta(x)$ only depends on
$\beta_\eta$.\\We will use the following three facts (recall that $\pi$ is the structure morphism).

\begin{enumerate}
\item[(1)] If $U$ contains $0$ and $1$ and $\mathrm{Res}_s(\beta)=0$ for all closed point $s\in\mathbb{A}^1_F\setminus U$, then $\beta(0)=\beta(1)$. Indeed, $\beta_\eta$ then comes from a class $\beta_0\in H^n(F,M)$ by \cite[Theorem~9.3]{GMS}. Thus $\beta$ and the constant class $\pi^*\beta_0$ have the same image in $H^n(F(t),M)$, hence the same value at every $x\in U(F)$, namely $x^*\pi^*\beta_0=\beta_0$.

\item[(2)] \emph{Residues of symbols.} For $f\in\mathcal{O}(U)^*$ and
$\gamma\in H^{n-1}(F,M)$, we have
$$\mathrm{Res}_s\bigl((f)\cup\pi^*\gamma\bigr)=v_s(f)\,\gamma_{|\kappa(s)}.$$
Indeed, writing $f=uP^{v_s(f)}$ in the completion of $F(t)$ at $v_s$, with $P$ a uniformizer
and $u$ a unit, we get $(f)\cup\gamma=(u)\cup\gamma+(P)\cup v_s(f)\gamma$, where the first
term is unramified, and we conclude by \cite[Proposition~7.11]{GMS}.

\item[(3)] \emph{Residues of corestrictions.} Let $L/F$ be a finite extension, let
$U_L\subseteq\mathbb{A}^1_L$ be the inverse image of $U$, and consider the corestriction
$\mathrm{cor}_{U_L/U}$. For $\xi\in H^n(U_L,M)$ we have
$$\mathrm{Res}_s\bigl(\mathrm{cor}_{U_L/U}\,\xi\bigr)
=\mathrm{Res}_s\bigl(\mathrm{cor}_{L(t)/F(t)}\,\xi_\eta\bigr)
=\sum_{y\mapsto s}\mathrm{cor}_{\kappa(y)/\kappa(s)}\bigl(\mathrm{Res}_y\,\xi\bigr),$$
where $y$ runs over the closed points of $\mathbb{A}^1_L$ mapping to $s$. For the second equality, let $K_s$ be the completion of $F(t)$ at $v_s$. By \cite[Chap.~II, \S3, Th\'eor\`eme~1]{Serre}, the algebra $L(t)\otimes_{F(t)}K_s$ is the product of the completions $L(t)_y$ of $L(t)$ at the points $y$ above $s$, so that over $K_s$ the corestriction from $L(t)$ becomes the sum of the corestrictions from the fields $L(t)_y$ (double coset formula, \cite[(1.5.6)]{NSW}). By \cite[Proposition~8.6]{GMS}, residues commute with corestrictions for finite extensions of complete fields, so the residue of the $y$-th term is $\mathrm{cor}_{\kappa(y)/\kappa(s)}(\mathrm{Res}_y\,\xi)$.
\end{enumerate}

\subsection{$R$-trivial classes as sums of corestrictions of cup products}

\begin{theorem}\label{corsym}
We have
$$\RH^n(F,M)=\sum_{L/F}\mathrm{cor}_{L/F}\bigl((L^*)\cup H^{n-1}(L,M)\bigr),$$
where $L$ runs over the finite extensions of $F$ and $(L^*)$ denotes Kummer classes in $H^1(L,\Z/p^m\Z)$.
\end{theorem}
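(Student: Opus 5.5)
The inclusion $\supseteq$ is immediate from earlier results. By Lemma~\ref{puresymbRtrivial}, every cup product $(a)\cup\beta$ with $a\in L^*$ and $\beta\in H^{n-1}(L,M)$ is $R$-trivial over $L$. Corestrictions preserve $R$-trivial classes, as shown in the subsection on corestrictions. Finally, $\RH^n(F,M)$ is a subgroup. Hence the whole right-hand side lies in $\RH^n(F,M)$.

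For $\subseteq$, I will use residues to modify a witnessing family. Let $\alpha\in\RH^n(F,M)$, and pick $U\subseteq\PP^1_F$ and $\gamma\in H^n(U,M)$ with $\gamma(0)=0$ and $\gamma(1)=\alpha$. Shrinking $U$, I may assume $U\subseteq\mathbb{A}^1_F$. Let $S=\mathbb{A}^1_F\setminus U$, a finite set of closed points. For each $s\in S$, the residue $\mathrm{Res}_s(\gamma)\in H^{n-1}(\kappa(s),M)$ is defined. I will build a correction term $\theta\in H^n(U,M)$ with two properties:
\begin{itemize}[label=$\circ$]
\item $\mathrm{Res}_s(\theta)=\mathrm{Res}_s(\gamma)$ for all $s\in S$;
\item $\theta(1)-\theta(0)$ lies in the right-hand side.
\end{itemize}
Fact (1) then applies to $\gamma-\theta$, whose residues all vanish, so $\gamma(1)-\gamma(0)=\theta(1)-\theta(0)$. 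This gives $\alpha=\theta(1)-\theta(0)$, which lies in the right-hand side.

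The correction term is built point by point. Fix $s\in S$ and set $L=\kappa(s)$, a finite extension of $F$. Let $P\in F[t]$ be the monic irreducible polynomial of $s$. Over $L$, the point $s$ gives a rational point $y_0$, namely $t=\tau$, where $\tau$ is the image of $t$ in $L$. Consider $f=t-\tau\in\mathcal{O}(U_L)^*$ and
$$\theta_s=\mathrm{cor}_{U_L/U}\bigl((f)\cup\pi^*\rho_s\bigr),\qquad \rho_s=\mathrm{Res}_s(\gamma)\in H^{n-1}(L,M).$$
I compute its residues using Facts (2) and (3). At a closed point $s'$ of $\mathbb{A}^1_F$, the residue of $\theta_s$ is a sum over points $y$ of $\mathbb{A}^1_L$ above $s'$ of $\mathrm{cor}_{\kappa(y)/\kappa(s')}\bigl(v_y(f)\rho_{s|\kappa(y)}\bigr)$. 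Only $y=y_0$ has $v_y(f)\neq0$, and it lies over $s$ with $\kappa(y_0)=\kappa(s)$. Therefore $\mathrm{Res}_s(\theta_s)=\rho_s$, and $\mathrm{Res}_{s'}(\theta_s)=0$ for every $s'\neq s$. Setting $\theta=\sum_{s\in S}\theta_s$ gives the first property.

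For the values, base change of corestriction (as in the corestriction subsection) gives
$$\theta_s(x)=\mathrm{cor}_{L/F}\bigl((x-\tau)\cup\rho_s\bigr)\quad\text{for } x\in\{0,1\}.$$
Both terms lie in $\mathrm{cor}_{L/F}\bigl((L^*)\cup H^{n-1}(L,M)\bigr)$, because $0,1\in U$ implies $x-\tau\neq 0$ in $L$. This gives the second property and concludes the argument.

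The step needing the most care is the residue computation for $\theta_s$. There are two points to check. First, I must verify that Fact (3) applies with $f$ invertible on $U_L$. This holds because the zero of $f$ is $y_0$, which lies over $s\notin U$. Second, the extra points of $\mathbb{A}^1_L$ above $s$, coming from the other factors of $P$ over $L$, contribute nothing, since $f$ is a unit there. No separability issue arises in characteristic $0$.
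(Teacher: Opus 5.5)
Your proposal is correct and follows the paper's proof essentially step for step. You correct the witnessing class by the corestrictions $\mathrm{cor}_{U_L/U}\bigl((t-\tau)\cup\pi^*\mathrm{Res}_s(\gamma)\bigr)$, match residues via Facts (2) and (3), and conclude with Fact (1); the only cosmetic difference is that you keep $\theta_s(1)$ and $\theta_s(0)$ as two separate corestricted cup products, whereas the paper combines them into the single class $\mathrm{cor}_{L/F}\bigl(\bigl(\frac{\tau-1}{\tau}\bigr)\cup\rho_s\bigr)$.
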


\begin{proof}
The right-hand side is contained in $\RH^n(F,M)$: a class $(a)\cup\gamma$ with $a\in L^*$ and
$\gamma\in H^{n-1}(L,M)$ is $R$-trivial over $L$ by Lemma~\ref{puresymbRtrivial}, and
corestrictions preserve $R$-trivial classes.

Conversely, let $\alpha\in\RH^n(F,M)$, and let $U$ and $\beta\in H^n(U,M)$ be such that
$\beta(0)=0$ and $\beta(1)=\alpha$. Removing the point at infinity if necessary, we may assume
$U\subseteq\mathbb{A}^1_F$. Let $Z=\mathbb{A}^1_F\setminus U$ and for $s\in Z$, set $\gamma_s=\mathrm{Res}_s(\beta)$. 

Let $s\in Z$ and set $L=\kappa(s)=F[t]/(P_s)$, where $P_s$ is the monic irreducible polynomial of $s$. Denote by $\theta_s\in L$ the class of $t$, viewed as a rational point of $\mathbb{A}^1_L$ lying over $s$. The function $t-\theta_s$ is invertible
on $U_L$, since $s\notin U$. Consider
$$\xi_s=(t-\theta_s)\cup\pi^*\gamma_s\in H^n(U_L,M).$$
By (2) applied over $L$, we have $\mathrm{Res}_{\theta_s}(\xi_s)=\gamma_s=\mathrm{Res}_s(\beta)$ and
$\mathrm{Res}_y(\xi_s)=0$ for every closed point $y\neq\theta_s$ of $\mathbb{A}^1_L$.
In particular setting
$$\beta_s=\mathrm{cor}_{U_L/U}(\xi_s)\in H^n(U,M),$$
we get from (3) that $\mathrm{Res}_s(\beta_s)=\gamma_s$ (the sum has only one nonzero term)
and $\mathrm{Res}_{s'}(\beta_s)=0$ for every $s'\in Z$ with $s'\neq s$. Evaluating $\beta_s$ we have, for $x\in \{0,1\}$, 
$$\beta_s(x)=\mathrm{cor}_{L/F}\bigl(\xi_s(x)\bigr)
=\mathrm{cor}_{L/F}\bigl((x-\theta_s)\cup\gamma_s\bigr),$$
so that
$$\beta_s(1)-\beta_s(0)=\mathrm{cor}_{L/F}\Bigl(\Bigl(\frac{\theta_s-1}{\theta_s}\Bigr)\cup
\gamma_s\Bigr).$$

We can now set $\tilde{\beta}=\beta-\sum_{s\in Z}\beta_s \in H^n(U,M)$. The residue of $\tilde{\beta}$ is trivial at every closed point of $Z$, hence $\tilde{\beta}(0)=\tilde{\beta}(1)$ by (1). This gives
$$\alpha=\beta(1)-\beta(0)=\sum_{s\in Z}\bigl(\beta_s(1)-\beta_s(0)\bigr)=\sum_{s\in Z} \mathrm{cor}_{\kappa(s)/F}\Bigl(\Bigl(\frac{\theta_s-1}{\theta_s}\Bigr)\cup
\gamma_s\Bigr).$$
\end{proof}

\begin{remark}
The final formula obtained in the proof is a variant of
\cite[Exercise~9.23]{GMS}.
\end{remark}

As a direct consequence, the next corollary extends Lemma~\ref{bocklemma}.

\begin{corollary}\label{corbock}
Let $0\to A\to B\to C\to0$ be an exact sequence of finite $p$-modules killed by $p^m$, with
connecting homomorphism $\delta\colon H^n(F,C)\to H^{n+1}(F,A)$. Then
$$\delta\bigl(\RH^n(F,C)\bigr)=\sum_{L/F}\mathrm{cor}_{L/F}\bigl((L^*)\cup K_L\bigr)\mbox{, where }K_L=\ker\bigl(H^n(L,A)\to H^n(L,B)\bigr),$$
where $L$ runs over finite extensions of $F$.
\end{corollary}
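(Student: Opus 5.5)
The plan is to apply $\delta$ to the description of $\RH^n(F,C)$ from Theorem~\ref{corsym}, using that $\delta$ commutes with corestriction and with cup product by a Kummer class. By Theorem~\ref{corsym} (applied to $M=C$), every element of $\RH^n(F,C)$ is a finite sum of classes $\mathrm{cor}_{L/F}\bigl((a)\cup\gamma\bigr)$ with $a\in L^*$ and $\gamma\in H^{n-1}(L,C)$. The connecting homomorphism commutes with corestriction: corestriction is induced by the trace $f_*f^*\to\mathrm{id}$, which is a morphism of $\delta$-functors. It also commutes, up to a sign $(-1)$ that is harmless since we take subgroups, with cup product by a degree-one class $(a)\in H^1(L,\Z/p^m\Z)$. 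This holds because tensoring the exact sequence with $\Z/p^m\Z$ keeps it exact, all modules being killed by $p^m$. Hence
$$\delta\bigl(\mathrm{cor}_{L/F}((a)\cup\gamma)\bigr)=\pm\,\mathrm{cor}_{L/F}\bigl((a)\cup\delta_L(\gamma)\bigr),$$
and $\delta_L(\gamma)\in H^n(L,A)$ lies in $K_L$ by exactness of the long exact sequence over $L$. This proves the inclusion $\subseteq$.

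For the reverse inclusion, take $L/F$ finite, $a\in L^*$ and $\kappa\in K_L$. By exactness, $\kappa=\delta_L(\gamma)$ for some $\gamma\in H^{n-1}(L,C)$; this uses $n\ge1$, so that $H^{n-1}$ makes sense and the long exact sequence applies. The class $\mathrm{cor}_{L/F}((a)\cup\gamma)$ lies in $\RH^n(F,C)$ by Theorem~\ref{corsym}. Its image under $\delta$ is $\pm\mathrm{cor}_{L/F}((a)\cup\kappa)$. Since $\delta$ is additive, sums are handled as well, which gives equality.

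The only point requiring care is the compatibility of $\delta$ with cup product by $(a)$. I would justify it by the standard property of cup products with respect to connecting maps for a short exact sequence $0\to A\to B\to C\to0$ paired with the flat (over $\Z/p^m\Z$) coefficient module $\Z/p^m\Z$; see e.g.\ \cite[(1.4.3)]{NSW}. The pairing $\Z/p^m\Z\otimes A\to A$ is just the module structure, so the tensored sequence is the original one. I expect this sign and compatibility check to be the main, though routine, obstacle. Everything else is formal from Theorem~\ref{corsym}. As a remark, when all $K_L$ vanish we recover Lemma~\ref{bocklemma}.
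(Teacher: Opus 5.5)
Your proposal is correct and follows essentially the same route as the paper: apply Theorem~\ref{corsym} to $C$, commute $\delta$ with corestriction and with cup product by a Kummer class (the paper records the sign as $\delta((x)\cup\gamma)=-(x)\cup\delta(\gamma)$), and use $\delta\bigl(H^{n-1}(L,C)\bigr)=K_L$. You simply write out both inclusions and the sign bookkeeping more explicitly than the paper does.
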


\begin{proof}
Observe that the connecting homomorphisms commute with corestrictions, and compute $\delta((x)\cup \gamma)=-(x)\cup\delta(\gamma)$ for $x\in L^*$ and $\gamma\in H^{n-1}(L,C)$.
Since $\delta\bigl(H^{n-1}(L,C)\bigr)=K_L$, the claim follows from
Proposition~\ref{corsym}.
\end{proof}


\begin{thebibliography}{99}
\bibitem{CTSan} J.-L. Colliot-Th\'el\`ene, J.-J. Sansuc, \emph{La $R$-\'equivalence sur les tores}, Ann. Sci. \'Ecole Norm. Sup. (4) 10, 175--229, 1977.
\bibitem{DCF2} C. De Clercq, M. Florence, \emph{Smooth profinite groups, I: geometrizing Kummer theory}, arXiv:2009.11130.
\bibitem{FR} M. Florence, Z. Reichstein, \emph{The rationality problem for forms of $\overline{M}_{0,n}$}, Bulletin of the London Math. Society, vol. 62 (1), 2018.
\bibitem{Milne} J.~S. Milne, \emph{\'Etale cohomology}, Princeton Math. Series 33, Princeton Univ. Press, 1980.
\bibitem{GMS} S. Garibaldi, A. Merkurjev, J.-P. Serre, \emph{Cohomological invariants in Galois cohomology}, University Lecture Series 28, Amer. Math. Soc., 2003.
\bibitem{NSW}Neukirch, J., Schmidt, A., Wingberg, K., \emph{Cohomology of number fields (2nd ed.)}, Springer, 2008.
\bibitem{Serre} J.-P. Serre, \emph{Local fields}, Graduate Texts in Math.~67, Springer, 1979.
\bibitem{Wadsworth} A. R. Wadsworth, \emph{$p$-Henselian fields: $K$-theory, Galois cohomology, and graded Witt rings}, Pacific J. Math. 105, no.~2, 473--496, 1983.
\end{thebibliography}
\end{document}